\newcounter{subbsection}[subsection]
\renewcommand*{\thesubbsection}{%
  \ifnum\value{subsection}=0 %
    \thesection
  \else
    \thesubsection
  \fi
}
 \newcommand\thmsname{\protect\theoremname}
 \newcommand\nm@thmtype{theorem}
 \theoremstyle{plain}
 \newenvironment{namedthm}[1][Undefined Theorem Name]{
   \ifx{#1}{Undefined Theorem Name}\renewcommand\nm@thmtype{theorem*}
   \else\renewcommand\thmsname{#1}\renewcommand\nm@thmtype{namedtheorem}
   \fi
   \begin{\nm@thmtype}}
   {\end{\nm@thmtype}}
  \theoremstyle{plain}
  \newtheorem*{thm*}{\protect\theoremname}
 \theoremstyle{plain}
 \newtheorem{thm}{Theorem}[subbsection]
  \theoremstyle{definition}
  \newtheorem{defn}[thm]{\protect\definitionname}
  \theoremstyle{remark}
  \newtheorem*{rem*}{\protect\remarkname}
  \theoremstyle{plain}
  \newtheorem{prop}[thm]{\protect\propositionname}
  \theoremstyle{plain}
  \newtheorem{lem}[thm]{\protect\lemmaname}
  \theoremstyle{remark}
  \newtheorem*{acknowledgement*}{\protect\acknowledgementname}
\newcommand*{\longhookrightarrow}{\ensuremath{\lhook\joinrel\relbar\joinrel\rightarrow}}
\DeclareMathOperator{\sHom}{{\mathcal H}\hspace{-.1em}\mathit{om}}
\global\long\def\ox#1{\mathcal{O}_{#1}}
\global\long\def\tx#1{\mathcal{T}_{#1}}
\newcommandx\sing[1][usedefault, addprefix=\global, 1=X]{\operatorname{Sing}(#1)}
\newcommandx\pn[1][usedefault, addprefix=\global, 1=n]{\mathbb{P}_{#1}}
\newcommandx\cotang[2][usedefault, addprefix=\global, 1=X, 2=1]{\Omega_{#1}^{#2}}
\newcommandx\Hom[2][usedefault, addprefix=\global, 1=\cotang, 2=\ox X]{\sHom(#1,#2)}
\global\long\def\complex{\mathbb{C}}
  \providecommand{\acknowledgementname}{Acknowledgement}
  \providecommand{\definitionname}{Definition}
  \providecommand{\lemmaname}{Lemma}
  \providecommand{\propositionname}{Proposition}
  \providecommand{\remarkname}{Remark}
  \providecommand{\theoremname}{Theorem}
\providecommand{\theoremname}{Theorem}
\begin{document}

\title{Varieties with Ample Tangent Sheaves}

\author{Philip Sieder}

\begin{abstract}
	This paper generalises Mori's famous theorem about ``Projective manifolds
	with ample tangent bundles'' \cite{Mori79} to normal projective
	varieties in the following way:
	
	A normal projective variety over $\complex$ with ample tangent sheaf
	is isomorphic to the complex projective space.
\end{abstract}

\maketitle
\thispagestyle{empty}

\section{Introduction}

In this paper we give a proof for the following theorem.
\begin{namedthm}[Main Theorem]
A normal projective variety over $\complex$ with ample tangent sheaf
is isomorphic to the projective space.
\end{namedthm}
We work over the field of complex numbers $\complex$. Besides that
restriction, the theorem is a generalisation to singular varieties
of Mori's famous result.
\begin{thm*}[\cite{Mori79}]
An $n$-dimensional projective manifold $X$ over an algebraically
closed field $\mathbb{K}$ with ample tangent bundle is isomorphic
to the projective space $\mathbb{P}_{\mathbb{K}}^{n}$.
\end{thm*}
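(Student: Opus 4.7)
The plan is to follow Mori's original strategy, which splits naturally into a \emph{production} step that exhibits rational curves on $X$ and a \emph{classification} step that uses minimal ones to identify $X$ with $\pn$.

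First I would observe that ampleness of $\tx X$ implies ampleness of $\det \tx X = -K_X$, so $X$ is Fano. The goal of the first phase is to produce a rational curve $f\colon \pn[1] \to X$ with controlled degree. Over $\complex$ there is no direct way to exhibit such a curve, so I would reduce modulo a prime $p\gg 0$, fix a curve $C_0\subset X$ and precompose the inclusion with a sufficiently high Frobenius twist to make $\deg f^\ast K_X$ arbitrarily negative relative to the genus. The key bend-and-break lemma then says: if a family of morphisms $\pn[1]\to X$ through two fixed points has positive dimension, the source degenerates and splits off a rational component. Iterating, one obtains a rational curve $C$ on the mod-$p$ reduction through a prescribed point with $-K_X\cdot C \le n+1$, and lifting the Hilbert scheme component back to characteristic zero gives such a curve on $X$ itself.

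Next I would exploit the ampleness of $\tx X$ to bound the degree from below. Pulling back along $f$, one has $f^\ast \tx X \cong \bigoplus_{i=1}^n \ox{\pn[1]}(a_i)$ with every $a_i\ge 1$, and the non-zero differential supplies an injection $\tx{\pn[1]} = \ox{\pn[1]}(2) \longhookrightarrow f^\ast \tx X$, forcing at least one $a_i\ge 2$. Hence $-K_X\cdot C = \sum a_i \ge n+1$. Combined with the upper bound from bend-and-break this yields a rational curve with $-K_X\cdot C = n+1$ and splitting type $f^\ast \tx X \cong \ox{}(2)\oplus \ox{}(1)^{\oplus n-1}$. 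Such a curve cannot degenerate into a reducible rational curve through a fixed point (any component would violate the lower bound $n+1$), so the Hom-scheme of these ``minimal'' rational curves through a fixed $x\in X$ is proper.

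For the classification step I would study, for a general point $x\in X$, the variety $\mathcal{H}_x$ parametrising minimal rational curves through $x$, together with the tangent-direction map $\tau\colon \mathcal{H}_x \to \pn[]{(\tx X|_x)} \cong \pn[n-1]$ sending $[f]\mapsto [df(T_0\pn[1])]$. The splitting $\ox{}(2)\oplus\ox{}(1)^{\oplus n-1}$ identifies the tangent space of $\mathcal{H}_x$ at $[f]$ with $H^0(\pn[1],f^\ast\tx X\otimes \mathfrak{m}_0)/T_0\pn[1]$, an $n{-}1$-dimensional space, so $\mathcal{H}_x$ is smooth of dimension $n-1$ and $\tau$ is unramified. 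Properness then forces $\tau$ to be a finite, hence surjective, morphism to $\pn[n-1]$. The main work is to upgrade this to an \emph{isomorphism} $\tau\colon \mathcal{H}_x\xrightarrow{\sim}\pn[n-1]$ and to show that the evaluation morphism $\mathrm{ev}\colon \mathcal{H}_x \times \pn[1]\to X$, which is proper and generically finite of degree one, collapses the fibre over $x$ and descends to an isomorphism $\operatorname{Bl}_x X \cong \pn[]{(\mathcal{O}\oplus\mathcal{O}(-1))}$ over $\pn[n-1]$, whence $X\cong\pn$.

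I expect the main obstacle to lie precisely in this last descent: ruling out that two distinct minimal curves through $x$ meet in a second point (so that $\tau$ is injective on closed points) and showing that $\mathcal{H}_x$ is connected and reduced. Both rely on a further application of bend-and-break to a hypothetical two-pointed family together with the sharp splitting type, which forbids the production of any curve of $-K_X$-degree $\le n$.
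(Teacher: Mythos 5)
Your proposal is correct in outline, but it follows a genuinely different route from the paper --- necessarily so, because the paper does not prove this statement at all: it quotes it from \cite{Mori79} as motivation, and the paper's own machinery re-proves it only over $\complex$. What you have reconstructed is Mori's original argument: reduction modulo $p\gg 0$ and Frobenius twisting so that bend-and-break produces, through any prescribed point, a rational curve with $-K_{X}.C\leq n+1$, lifted back to characteristic zero by a constructibility argument on the relative Hom scheme; then the ampleness splitting $f^{*}\tx X\simeq\bigoplus_{i}\ox{\pn[1]}(a_{i})$, $a_{i}\geq1$, with one $a_{i}\geq2$ coming from the differential, forcing $-K_{X}.C\geq n+1$; and finally the classification of the resulting minimal rational curves through a fixed point. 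The paper proves its \prettyref{thm:mainTheorem} (whose smooth case over $\complex$ is Mori's theorem) using only the middle ingredient --- \prettyref{lem:TxCurves} is essentially your splitting argument --- and replaces the other two by black boxes: uniruledness via the numerical criterion of \cite{Uniruledness}, and the characterisation of $\pn[n]$ by the condition $-K_{X}.C\geq n+1$ for all rational curves through a general point (\prettyref{thm:choMijaoka}, due to Cho--Miyaoka--Shepherd-Barron \cite{ChoMijaokaShepherd}, cf.\ Kebekus \cite{Kebekus01}). That sidesteps your entire classification step, which is where the bulk of Mori's paper lies and which your proposal only sketches: smoothness of $\mathcal{H}_{x}$, finiteness of the tangent map, and the descent of the evaluation morphism to $\operatorname{Bl}_{x}X\simeq\mathbb{P}\bigl(\ox{\pn[n-1]}\oplus\ox{\pn[n-1]}(-1)\bigr)$ all still need to be carried out, though the tools you name (properness of the family of minimal curves, plus a second bend-and-break to exclude two minimal curves through $x$ meeting again) are the correct ones. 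The trade-off is clear: your route is the only one that proves the statement as actually written, over an arbitrary algebraically closed field $\mathbb{K}$ in any characteristic, since \prettyref{thm:choMijaoka} is a characteristic-zero result --- consistent with the paper's own restriction of its Main Theorem to $\complex$; the paper's route buys brevity and, more importantly, applicability to singular varieties, where a well-behaved family of minimal rational curves is much harder to control.

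One local correction to your deformation-theoretic count: the tangent space to $\mathcal{H}_{x}$ at $[f]$ is $H^{0}(\pn[1],f^{*}\tx X\otimes\mathfrak{m}_{0})$ modulo the \emph{two}-dimensional space $H^{0}(\pn[1],\tx{\pn[1]}\otimes\mathfrak{m}_{0})$ of reparametrisations fixing $0$, not modulo the line $T_{0}\pn[1]$; with the splitting $\ox{\pn[1]}(2)\oplus\ox{\pn[1]}(1)^{\oplus n-1}$ this gives $(n+1)-2=n-1$, so the dimension you claim is still right.
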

Mori's work has been generalised over the years in various ways, for
example by Andreatta and Wi{\'s}niewski \cite{AndeattaWisniewski}:
For $X$ being $\pn$ it suffices that $\tx X$ contains an ample
subbundle. This has been altered by Aprodu, Kebekus and
Peternell \cite[Section 4]{GaloisCoverings}. They add the assumption
that $X$ has Picard number 1, but an ample subsheaf (not necessarily
locally free) of $\tx X$ then induces $X\simeq\pn[n]$. 
Generalising those results, Liu \cite{Liu2016AKP} recently showed that $X$ is already the projective space if $\tx X$ contains an ample subsheaf (again not necessarily locally free).
%
Kebekus \cite{Kebekus01}
even characterises $\pn[n]$ only by using the anticanonical degree
of all rational curves being greater than $n$. All these efforts, besides
Ballico's article \cite{Ballico}, keep the preliminary that $X$
is smooth. Ballico's paper on the other hand treats mainly positive
characteristic, as he requires the tangent sheaf to be locally free.
Which, the Zariski-Lipman conjecture suggests, is most likely never
the case over the complex numbers, if $X$ is singular.

\subsubsection*{Outline of our proof}

We consider a special desingularisation $\hat{X}$ of the given variety
$X$ of dimension $\geq2$ (normal curves are smooth) and prove that
$\hat{X}$ is the projective space. As $\pn[n]$ is minimal, $X$
itself is already the projective space. To show that $\hat{X}$ is
the projective space, we combine two strong results. 

First, we relate $\tx X$ to $\tx{\hat{X}}$: For a suitable desingularisation
$\pi \colon \hat{X}\rightarrow X$, there is a morphism $f \colon \pi^{*}\tx X\rightarrow\tx{\hat{X}}$
that is an isomorphism outside $\pi^{-1}(\sing[X])$ (\prettyref{thm:vectorfieldsGeneral}). 

Secondly, we use a corollary given by Cho, Miyaoka and Shepherd-Barron
\cite[Corollary 0.4 (11)]{ChoMijaokaShepherd} that Kebekus \cite{Kebekus01}
later proved directly (although he claims a weaker result): A uniruled
manifold $\hat{X}$ is isomorphic to the projective space, if the
anticanonical degree $-K_{\hat{X}}.\hat{C}$ is greater or equal $n+1$
for all rational curves $\hat{C}$ through a general point $p$. The
uniruledness of $\hat{X}$ follows from the negativity of $K_{\hat{X}}$
and the anticanonical degree is calculated using the splitting of
$\tx{\hat{X}}|_{\hat{C}}$ on the normalisation of $\hat{C}$ (\prettyref{lem:TxCurves}).
Hence $\hat{X}\simeq\pn[n]\simeq X$.

\section{Preliminaries\label{sec:Preliminaries}}

Let us first recall the definition of the tangent sheaf for a proper
variety, as it is a central term in this paper.
\begin{defn}[tangent sheaf]
Let $X$ be a algebraic variety, then its \emph{tangent sheaf} $\tx X\coloneqq\Hom$
is the dual of the cotangent sheaf.

We want to work on a desingularisation $\hat{X}$ of the normal variety
$X$, so we have to connect $\tx X$ with $\tx{\hat{X}}$:
\end{defn}
\begin{thm}
\label{thm:isomorphismTxTdesing}Let $X$ be a normal projective variety
with tangent sheaf $\tx X$. Then there is a desingularisation $\pi \colon \hat{X}\rightarrow X$
and an $\ox X$-module isomorphism 
\[
\tx X\rightarrow\pi_{*}\tx{\hat{X}}.
\]
\end{thm}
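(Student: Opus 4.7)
My plan is to take $\pi \colon \hat X \to X$ to be any resolution of singularities à la Hironaka that restricts to an isomorphism over the smooth locus $X^{\circ} := X \setminus \sing$, and to obtain the required isomorphism $\tx X \to \pi_*\tx{\hat X}$ as the inverse of a natural comparison map
\[
\alpha \colon \pi_*\tx{\hat X} \longrightarrow \tx X.
\]
The map $\alpha$ is built by dualising the canonical morphism $\pi^*\cotang \to \cotang[\hat X]$ over $\ox{\hat X}$, applying $\pi_*$, and combining sheaf-adjunction with the equality $\pi_*\ox{\hat X} = \ox X$ (which holds by normality of $X$ and Zariski's main theorem); concretely, a vector field $\sigma$ on $\pi^{-1}(V)$ is sent to the derivation of $\ox X(V) \subset \ox{\hat X}(\pi^{-1}(V))$ that $\sigma$ induces by restriction. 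Over $X^{\circ}$ the morphism $\alpha$ is tautologically an isomorphism because $\pi$ is. Injectivity on all of $X$ is equally cheap: since $\hat X$ is smooth, $\tx{\hat X}$ is locally free, so $\pi_*\tx{\hat X}$ is torsion-free; any section of $\ker\alpha$ vanishes on the dense open $X^{\circ}$, and hence vanishes identically.

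The real work is surjectivity, which I would derive from reflexivity. Since $\tx X = \sHom(\cotang, \ox X)$ is the $\ox X$-dual of a coherent sheaf, it is automatically reflexive, and normality of $X$ delivers the Hartogs-type identity
\[
\tx X = j_*\bigl(\tx X|_{X^{\circ}}\bigr), \qquad j\colon X^{\circ} \hookrightarrow X.
\]
If the analogous identity $\pi_*\tx{\hat X} = j_*(\pi_*\tx{\hat X}|_{X^{\circ}})$ can be established, then both sheaves are recovered from canonically identified restrictions, $\alpha$ is an isomorphism, and its inverse gives the statement of the theorem.

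The main obstacle lies exactly in this last extension property for $\pi_*\tx{\hat X}$: the exceptional locus of $\pi$ typically has codimension one in $\hat X$, so naive Hartogs arguments across it are hopeless, and one must instead invoke the characteristic-zero phenomenon that every derivation of $\ox X$ defined near a singular point lifts to a regular vector field on the whole preimage under the resolution. This is precisely where the specific choice of $\pi$ in the statement enters the picture: one would pick $\pi$ to be a functorial resolution, so that the local one-parameter group of automorphisms integrating a given derivation of $\ox X$ lifts automatically by functoriality, and its infinitesimal generator supplies the desired regular extension across the exceptional divisor. I would expect this local lifting — not the sheaf-theoretic bookkeeping — to absorb essentially the entire remaining technical content of the proof.
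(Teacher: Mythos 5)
Your proposal is correct and follows essentially the same route as the paper: the whole content is the reflexivity (Hartogs extension) property of $\pi_{*}\tx{\hat{X}}$ for a suitably chosen resolution, which the paper imports as a black box from Graf--Kov\'acs and which you correctly isolate and sketch via the functorial-resolution / one-parameter-group lifting argument --- precisely the proof given in Greb--Kebekus--Kov\'acs, to which the paper's own remark points. The only caveat is that your opening choice of ``any Hironaka resolution'' must be replaced by the functorial one from the outset, as you yourself note when you reach the surjectivity step.
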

\begin{proof}
Graf and Kov\'{a}cs \cite[Theorem 4.2]{GrafKovacsOptimalExtension}
state that there is a resolution $\pi \colon \hat{X}\rightarrow X$ such
that $\pi_{*}\tx{\hat{X}}$ is reflexive. The sheaves $\tx X$ and
$\pi_{*}\tx{\hat{X}}$ are reflexive, $X$ is normal and $\pi$ is
an isomorphism outside the preimage of a set of codimension $2$.
Thus we obtain an isomorphism $\tx X\rightarrow\pi_{*}\tx{\hat{X}}$.
\end{proof}
\begin{rem*}
For a more thorough understanding of the map $\tx X\rightarrow\pi_{*}\tx{\hat{X}}$
and the resolution $\pi$, see the paper of Greb, Kebekus and Kov\'{a}cs
\cite[Section 4]{GKKExtensionDifferentialForms}.
\end{rem*}
The most cited definition for ample sheaves is in Ancona's paper \cite{AnconaAmple}.
He defines ampleness and provides some equivalent characterisations,
but gives very few properties. Kubota \cite{KubotaAmple} on the other
hand works over graded $\ox X$-modules and gives some properties,
but does not use the most modern language.

So we recall a definition and the most important properties we use
throughout this work.
\begin{defn}[ample sheaf]
 Let $X$ be a proper algebraic variety and $\mathcal{E}$ a coherent
sheaf on $X$. Then we say $\mathcal{E}$ is \emph{ample} if for every
coherent sheaf $\mathcal{F}$ on $X$ there exists an $n=n(\mathcal{F})$ such that $\mathcal{F}\otimes S^{m}\mathcal{E}$
is globally generated for $m\geq n$.
\end{defn}
\begin{rem*}
Other characterisations of ampleness can be found in \cite{AnconaAmple}.
Note that an ample sheaf, unlike an ample vector bundle, on a proper
variety $X$ does not yield that its support is projective, but only
Moishezon \cite[Remark p.\ 244]{GrauertPeternellRemmert1994}.
\end{rem*}
The following properties can be found in Debarre's paper \cite[Section 2]{Debarre}
or the proof in the vector bundle case (as in \cite{LazarsfeldPositivityII})
carries over to coherent sheaves:
\begin{prop}
Let $X$ and $Y$ be normal projective varieties, $f \colon Y\rightarrow X$
a finite morphism, $\mathcal{E}$, $\mathcal{E}_{1}$ and $\mathcal{E}_{2}$
 sheaves of $\ox X$-modules and $\mathcal{E}$ ample, then 
\vspace{-\parskip}
\begin{enumerate}
\item $f^{*}\mathcal{E}$ is ample (in particular restrictions of ample
sheaves are ample)
\item every quotient of $\mathcal{E}$ is ample
\item $\mathcal{E}_{1}\oplus\mathcal{E}_{2}$ is ample if and only if $\mathcal{E}_{1}$
and $\mathcal{E}_{2}$ are both ample
\end{enumerate}
\end{prop}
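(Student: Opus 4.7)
My plan is to prove the three assertions in the order (2), ``$\Rightarrow$'' of (3), ``$\Leftarrow$'' of (3), and finally (1). The first two are essentially formal, the converse in (3) is the technical core, and (1) is a self-contained application of the projection formula.

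For (2), let $\mathcal{E}\twoheadrightarrow\mathcal{Q}$ be a quotient. Since $S^{m}(\cdot)$ and $\cdot\otimes\mathcal{F}$ are right exact, I obtain a surjection $\mathcal{F}\otimes S^{m}\mathcal{E}\twoheadrightarrow\mathcal{F}\otimes S^{m}\mathcal{Q}$, and quotients of globally generated sheaves are globally generated. Hence the integer $n(\mathcal{F})$ witnessing the ampleness of $\mathcal{E}$ also works for $\mathcal{Q}$. The direction ``$\Rightarrow$'' of (3) follows immediately, because $\mathcal{E}_{1}$ and $\mathcal{E}_{2}$ are both quotients of $\mathcal{E}_{1}\oplus\mathcal{E}_{2}$ via the canonical projections.

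For the converse in (3), I would begin with the decomposition
\[
S^{m}(\mathcal{E}_{1}\oplus\mathcal{E}_{2})=\bigoplus_{i+j=m}S^{i}\mathcal{E}_{1}\otimes S^{j}\mathcal{E}_{2}
\]
and show, for a given coherent $\mathcal{F}$, that every summand $\mathcal{F}\otimes S^{i}\mathcal{E}_{1}\otimes S^{j}\mathcal{E}_{2}$ is globally generated once $m$ exceeds a threshold independent of the split $i+j=m$. This uniformity is the main obstacle, since applying ampleness of $\mathcal{E}_{2}$ to $\mathcal{F}\otimes S^{i}\mathcal{E}_{1}$ naively produces a bound depending on $i$. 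I would resolve this by a case split. Ampleness of $\mathcal{E}_{1}$ applied to $\mathcal{F}$ yields an $N_{1}$ with $\mathcal{F}\otimes S^{i}\mathcal{E}_{1}$ globally generated whenever $i\geq N_{1}$, and ampleness of $\mathcal{E}_{2}$ applied to $\ox X$ yields an $r_{2}$ with $S^{j}\mathcal{E}_{2}$ globally generated whenever $j\geq r_{2}$. For $i\geq N_{1}$ and $j\geq r_{2}$ the summand is a tensor of two globally generated sheaves and is therefore globally generated. The exceptional cases $i<N_{1}$ and $j<r_{2}$ are each finite and are handled by applying ampleness of the \emph{other} factor to the finitely many coherent sheaves $\mathcal{F}\otimes S^{i}\mathcal{E}_{1}$ with $i<N_{1}$, and $\mathcal{F}\otimes S^{j}\mathcal{E}_{2}$ with $j<r_{2}$, producing a finite collection of thresholds whose maximum serves as the required $n(\mathcal{F})$.

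For (1), the plan is to transfer global generation along the finite morphism $f$. Let $\mathcal{G}$ be coherent on $Y$; because $f$ is finite (hence proper), $f_{*}\mathcal{G}$ is coherent on $X$. Ampleness of $\mathcal{E}$ then gives an $n$ with $f_{*}\mathcal{G}\otimes S^{m}\mathcal{E}$ globally generated for $m\geq n$. The projection formula, combined with the commutation $f^{*}S^{m}\mathcal{E}=S^{m}f^{*}\mathcal{E}$, identifies this sheaf with $f_{*}(\mathcal{G}\otimes S^{m}f^{*}\mathcal{E})$. A surjection $\ox X^{N}\twoheadrightarrow f_{*}(\mathcal{G}\otimes S^{m}f^{*}\mathcal{E})$ pulls back to $\ox Y^{N}\rightarrow f^{*}f_{*}(\mathcal{G}\otimes S^{m}f^{*}\mathcal{E})$, which composes with the counit $f^{*}f_{*}\rightarrow\mathrm{id}$ (surjective because $f$ is affine) to produce the desired surjection $\ox Y^{N}\twoheadrightarrow\mathcal{G}\otimes S^{m}f^{*}\mathcal{E}$. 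Restriction to a closed subvariety falls out as the special case in which $f$ is the corresponding closed immersion.
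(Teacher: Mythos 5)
Your proof is correct and follows exactly the standard line that the paper itself delegates to Debarre's paper and to the vector-bundle arguments in Lazarsfeld: surjectivity of symmetric powers of quotients for (2), the decomposition $S^{m}(\mathcal{E}_{1}\oplus\mathcal{E}_{2})=\bigoplus_{i+j=m}S^{i}\mathcal{E}_{1}\otimes S^{j}\mathcal{E}_{2}$ together with the uniformity bookkeeping for (3), and the projection formula plus surjectivity of the counit $f^{*}f_{*}\rightarrow\mathrm{id}$ for (1). The only step a reader might pause at is the projection formula applied to the non-locally-free sheaf $S^{m}\mathcal{E}$, but this is legitimate precisely because a finite morphism is affine, so the identification $f_{*}(\mathcal{G}\otimes f^{*}\mathcal{M})\simeq f_{*}\mathcal{G}\otimes\mathcal{M}$ can be verified directly on modules for arbitrary quasi-coherent $\mathcal{M}$.
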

\begin{prop}[{\cite[6.4.17]{LazarsfeldPositivityII}}]
\label{prop:curvemapAmple}Let $C$ be a smooth curve and ${\mathcal{E}}$
and ${\mathcal{F}}$ vector bundles on $C$. If ${\mathcal{E}}$ is
ample and there is a homomorphism ${\mathcal{E}}\rightarrow{\mathcal{F}}$,
surjective outside of finitely many points, then ${\mathcal{F}}$
is ample.
\end{prop}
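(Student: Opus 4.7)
The plan is to factor $\phi$ through its image and then push ampleness across the resulting inclusion with torsion cokernel. Set $\mathcal{G}\coloneqq\operatorname{im}(\phi)\subset\mathcal{F}$. Because $C$ is a smooth curve, the torsion-free sheaf $\mathcal{G}$ is automatically locally free, and the hypothesis on $\phi$ forces the cokernel $\mathcal{T}\coloneqq\mathcal{F}/\mathcal{G}$ to be a torsion sheaf of finite length. Property~(2) of ample sheaves from the preceding proposition, applied to the surjection $\mathcal{E}\twoheadrightarrow\mathcal{G}$, shows that $\mathcal{G}$ is ample. Picking a composition series of $\mathcal{T}$ whose successive quotients are residue fields $k_{p_{i}}$ and lifting it to a chain $\mathcal{G}=\mathcal{F}_{0}\subset\mathcal{F}_{1}\subset\cdots\subset\mathcal{F}_{N}=\mathcal{F}$ of locally free subsheaves of $\mathcal{F}$ with $\mathcal{F}_{i+1}/\mathcal{F}_{i}\cong k_{p_{i}}$, an induction on $N$ reduces the statement to a single elementary transform $0\to\mathcal{G}\to\mathcal{F}\to k_{p}\to 0$ with $\mathcal{G}$ ample and $\mathcal{F}$ locally free.

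For this core step I would invoke Hartshorne's numerical criterion for ampleness on a smooth projective curve: a vector bundle is ample iff every quotient line bundle---of itself and of all finite pullbacks by smooth curves---has positive degree. Pulling back $0\to\mathcal{G}\to\mathcal{F}\to k_{p}\to 0$ by any finite $\mu\colon C'\to C$ yields again an inclusion with torsion cokernel (because $\mu$ is flat and $k_{p}$ has a length-one locally free resolution), so the argument runs uniformly and I only need to produce the inequality $\deg L>0$ for an arbitrary quotient $\mathcal{F}\twoheadrightarrow L$ to a line bundle. The composition $\mathcal{G}\to\mathcal{F}\to L$ cannot vanish, because otherwise $L$ would arise as a surjective image of the torsion sheaf $k_{p}$, impossible for a line bundle on $C$. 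Its image is therefore a nonzero subsheaf of $L$, necessarily of the form $L(-D)$ for some effective $D\geq 0$. As a quotient of the ample $\mathcal{G}$, this line bundle $L(-D)$ is ample, hence $\deg L(-D)>0$, and therefore $\deg L=\deg L(-D)+\deg D>0$.

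The heart of the proof is precisely this last step: transmitting ampleness across a proper inclusion of vector bundles with torsion cokernel. The shortest bridge is Hartshorne's numerical criterion, classical but not recorded among the preliminaries of Section~\ref{sec:Preliminaries}. Arguing instead directly from the paper's definition in terms of global generation of $\mathcal{H}\otimes S^{m}\mathcal{F}$ would force an analysis of the induced filtration on $S^{m}\mathcal{F}$ coming from $\mathcal{G}\subset\mathcal{F}$---whose graded pieces interleave $S^{m-i}\mathcal{G}$ with torsion contributions at $p$---and is considerably more delicate.
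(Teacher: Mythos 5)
The paper itself does not prove this proposition: it is imported verbatim as a citation of Lazarsfeld, so there is no in-paper argument to compare against. Your reconstruction is correct and is essentially the standard proof (and close in spirit to Lazarsfeld's own): factor $\phi$ through its image $\mathcal{G}$, observe that $\mathcal{G}$ is locally free and ample with torsion cokernel in $\mathcal{F}$, and then transmit ampleness across the inclusion $\mathcal{G}\hookrightarrow\mathcal{F}$ via Hartshorne's numerical criterion, using that every quotient line bundle $L$ of (a finite pullback of) $\mathcal{F}$ receives a nonzero map from the corresponding pullback of $\mathcal{G}$, whose image $L(-D)$ is an ample, hence positive-degree, line bundle. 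Two small remarks. First, your composition-series reduction to a single elementary transformation $0\to\mathcal{G}\to\mathcal{F}\to k_{p}\to 0$ is harmless but redundant: the argument of your second paragraph applies verbatim to the full inclusion $\mathcal{G}\hookrightarrow\mathcal{F}$ with torsion cokernel $\mathcal{T}$, since a line bundle still cannot be a quotient of $\mathcal{T}$. Second, both Hartshorne's criterion and the step ``ample line bundle $\Rightarrow$ positive degree'' require $C$ to be complete, which the statement leaves implicit (``smooth curve''); this is satisfied in the paper's only application, where $C$ is the normalisation of a closed curve in a projective variety, but it is worth flagging as a hypothesis. Working over $\mathbb{C}$ you could even use the characteristic-zero form of the criterion (quotients of $\mathcal{F}$ itself suffice), though carrying the finite pullbacks along costs nothing since ampleness is preserved under finite pullback.
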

We need one further result which is, besides \prettyref{thm:isomorphismTxTdesing},
the main ingredient for our result:
\begin{thm}[{\cite[Corollary 0.4 (11)]{ChoMijaokaShepherd}}]
\label{thm:choMijaoka}A uniruled projective complex manifold $X$
of dimension $n$ with a dense open subspace $U$ such that for all
$p\in U$ and all rational curves $C$ through $p$ the inequality
$-K_{X}.C\geq n+1$ holds, is isomorphic to $\pn[n]$.
\end{thm}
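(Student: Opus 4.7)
The plan is to combine Mori's theory of minimal rational curves with a tangent-direction characterisation of $\pn[n]$. Since $X$ is uniruled, Mori's bend-and-break yields a dominating family $H$ of rational curves of minimum anticanonical degree $d$. By minimality, $H$ is \emph{unsplit}: no member through a general point degenerates into a reducible curve. The hypothesis of the theorem forces $d \geq n+1$ on curves meeting $U$.

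First I would fix a general $p \in U$, choose a member $C \in H$ through $p$ and its normalisation $\nu \colon \pn[1] \rightarrow X$ with $\nu(0) = p$. Since $C$ moves freely in a neighbourhood of $p$, the pullback $\nu^{*}\tx X$ decomposes as $\bigoplus_{i=1}^{n}\ox{\pn[1]}(a_{i})$ with all $a_{i}\geq 0$ and $\sum a_{i} = d \geq n+1$. Applying bend-and-break to deformations fixing two general points of $C$, together with the minimality of $d$, forces the splitting type to be $(2,1,\dots,1,0,\dots,0)$: any $a_{i} \geq 3$, or a pair $a_{i},a_{j} \geq 2$, would produce reducible deformations contradicting unsplitness. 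Combining this shape with $\sum a_{i} \geq n+1$ and rank $n$ gives $d = n+1$ with no zero entries, so $\nu^{*}\tx X \simeq \ox{\pn[1]}(2) \oplus \ox{\pn[1]}(1)^{\oplus (n-1)}$.

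Next I would study the scheme $H_{p}$ parametrising members of $H$ through $p$. Standard deformation theory gives $\dim H_{p} = h^{0}(\pn[1], \nu^{*}\tx X(-1)) - 1 = n - 1$. The tangent map $\tau_{p} \colon H_{p} \rightarrow \mathbb{P}(\tx X|_{p})$, sending a curve to its tangent direction at $p$, is well-defined, proper, and finite: two members sharing a tangent direction at $p$ would, by a further bend-and-break argument using unsplitness, have to coincide. Since $\dim H_{p} = n - 1 = \dim \mathbb{P}(\tx X|_{p})$, the map $\tau_{p}$ is surjective, so every tangent direction at a general point is realised by an $H$-curve.

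The conclusion $X \simeq \pn[n]$ then follows from the standard characterisation of projective space by minimal rational curves: with an unsplit family whose tangent map at a general point is surjective, one constructs a birational morphism $X \rightarrow \pn[n]$ via the ``double evaluation'' of pointed curves through two general points, and upgrades it to an isomorphism by comparing degrees of the tangent spaces. The main obstacle is the splitting-type analysis and the finiteness of $\tau_{p}$; both rest on delicate applications of bend-and-break for pointed rational curves combined with the minimality of $H$, and they are exactly the technical core that Cho--Miyaoka--Shepherd-Barron and later Kebekus had to carry out in detail.
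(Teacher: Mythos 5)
The first thing to note is that the paper does not prove this statement at all: \prettyref{thm:choMijaoka} is quoted as an external input, namely \cite[Corollary 0.4 (11)]{ChoMijaokaShepherd} (together with Kebekus's later direct proof in \cite{Kebekus01}), and is used as a black box in the proof of the Main Theorem. So there is no in-paper argument to compare yours against; the only meaningful comparison is with the cited literature.

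Measured against that, your outline reproduces the correct architecture of the known proof --- minimal dominating family, unsplitness, splitting type $\ox{\pn[1]}(2)\oplus\ox{\pn[1]}(1)^{\oplus(n-1)}$, finiteness and surjectivity of the tangent map $\tau_{p}$, and a final reconstruction of $\pn[n]$ --- but as a proof it has genuine gaps, which you yourself concede in your closing sentence. The rigidity of the splitting type for minimal curves, the injectivity and finiteness of $\tau_{p}$ at a general point (Kebekus's theorem; one must first exclude cuspidal degenerations and members singular at $p$), and the passage from ``$\tau_{p}$ surjective'' to ``$X\simeq\pn[n]$'' (in Kebekus's version: the evaluation map of the universal family over $H_{p}$ realises the blow-up of $X$ at $p$ as a $\pn[1]$-bundle over $\pn[n-1]$) are each substantial theorems, not routine applications of bend-and-break; asserting them is precisely deferring the technical core to \cite{ChoMijaokaShepherd} and \cite{Kebekus01}. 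There is also a small slip in the dimension count: the space of curves through $p$ has dimension $h^{0}(\pn[1],\nu^{*}\tx X(-1))-2$, the $2$ coming from the automorphisms of $\pn[1]$ fixing a point; with the splitting type above this gives $(n+1)-2=n-1$, whereas your formula with ``$-1$'' would give $n$. In short, the strategy is the right one, but what you have written is a roadmap of the Cho--Miyaoka--Shepherd-Barron/Kebekus proof rather than a proof, which is consistent with the paper's own decision to cite the result rather than reprove it.
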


\section{\label{sec:dim3}Projective varieties with ample tangent sheaves}

Now we get to the main result of the paper:
\begin{thm}
\label{thm:mainTheorem}Let $X$ be a normal projective variety over
$\complex$ of dimension $n$ with ample tangent sheaf $\tx X$, then
\[
\setlength{\abovedisplayskip}{1\abovedisplayshortskip}X\simeq\pn[n].
\]
\end{thm}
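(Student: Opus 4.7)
The plan is to follow the outline sketched in the introduction. For $n=1$ the variety $X$ is smooth and ampleness of $\tx X$ yields $\deg\tx X>0$, whence $X\simeq\pn[1]$, so I may assume $n\geq 2$. Apply \prettyref{thm:isomorphismTxTdesing} to obtain the desingularisation $\pi\colon\hat{X}\to X$ and take from \prettyref{thm:vectorfieldsGeneral} the morphism $f\colon\pi^{*}\tx X\to\tx{\hat{X}}$ which is an isomorphism on $\hat{X}\setminus E$, where $E\coloneqq\pi^{-1}(\sing[X])$. The strategy is to verify the hypotheses of \prettyref{thm:choMijaoka} on $\hat{X}$ in order to obtain $\hat{X}\simeq\pn[n]$, and then to conclude $X\simeq\pn[n]$ by rigidity of $\pn[n]$ under birational morphisms.

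For uniruledness of $\hat{X}$, I would intersect $n-1$ very general members of a very ample linear system on $\hat{X}$. Bertini together with $\codim_{\hat{X}}E\geq 2$ produces a covering family of smooth curves $\hat{C}$ that avoid $E$. Then $\pi$ maps each such $\hat{C}$ isomorphically onto a curve $C\subset X\setminus\sing[X]$, and $f|_{\hat{C}}$ identifies $\tx{\hat{X}}|_{\hat{C}}$ with $\tx X|_{C}$, which is ample as the restriction of an ample sheaf. On the smooth curve $\hat{C}$ this forces $-K_{\hat{X}}\cdot\hat{C}=\deg\det\tx{\hat{X}}|_{\hat{C}}>0$, and the Miyaoka--Mori criterion then implies that $\hat{X}$ is uniruled.

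For the numerical hypothesis of \prettyref{thm:choMijaoka}, fix a general $p\in\hat{X}\setminus E$ and any rational curve $\hat{C}\ni p$. Since $\hat{C}\not\subset E$, the composition $g\coloneqq\pi\circ\nu\colon\pn[1]\to X$ is finite, where $\nu\colon\pn[1]\to\hat{C}$ is the normalisation. Hence $g^{*}\tx X$ is ample on $\pn[1]$. The pullback $\nu^{*}f\colon g^{*}\tx X\to\nu^{*}\bigl(\tx{\hat{X}}|_{\hat{C}}\bigr)$ is an isomorphism off the finite set $\nu^{-1}(\hat{C}\cap E)$, so \prettyref{prop:curvemapAmple} makes $\nu^{*}(\tx{\hat{X}}|_{\hat{C}})$ ample on $\pn[1]$. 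Writing it as $\bigoplus_{i=1}^{n}\ox{\pn[1]}(a_{i})$, ampleness forces every $a_{i}\geq 1$, and the differential of $\pn[1]\to\hat{X}$ supplies a nonzero morphism $\tx{\pn[1]}\simeq\ox{\pn[1]}(2)\to\nu^{*}(\tx{\hat{X}}|_{\hat{C}})$ which, projected onto a summand, yields $a_{j}\geq 2$ for some $j$. Summing degrees gives $-K_{\hat{X}}\cdot\hat{C}=\sum_{i}a_{i}\geq n+1$ (this is essentially \prettyref{lem:TxCurves}).

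Now \prettyref{thm:choMijaoka} applies and gives $\hat{X}\simeq\pn[n]$. Since $\pn[n]$ has Picard number one, the birational morphism $\pi\colon\pn[n]\to X$ contracts no curves and must be an isomorphism, hence $X\simeq\pn[n]$. The main obstacle lies in the third paragraph: $\tx{\hat{X}}|_{\hat{C}}$ need not be locally free, $\hat{C}$ may be singular, and $f$ is only an isomorphism outside $E$; pulling back along $\nu$ and invoking \prettyref{prop:curvemapAmple} is what lets us transfer the ampleness of $\tx X$ all the way to the Grothendieck splitting on $\pn[1]$ and extract the estimate $\sum a_{i}\geq n+1$.
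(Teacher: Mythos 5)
Your proposal is correct and follows essentially the same route as the paper: you reprove \prettyref{lem:TxCurves} inline (normalise the rational curve, transfer ampleness via $f$ and \prettyref{prop:curvemapAmple}, split on $\pn[1]$, and use the differential to force some $a_{j}\geq2$), then apply \prettyref{thm:choMijaoka} and the minimality of $\pn[n]$, which is exactly the paper's argument. The one point to make explicit is that \prettyref{prop:curvemapAmple} is stated for vector bundles, so before invoking it you should replace $g^{*}\tx X$ by the quotient $\nu^{*}\pi^{*}\tx X/\ker(\nu^{*}f)$, which is torsion-free on a smooth curve (hence locally free) and still ample as a quotient of an ample sheaf --- this is precisely how the paper handles the obstacle you flag in your last paragraph.
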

Before proving the main theorem we have to adapt the results given
in \prettyref{sec:Preliminaries}.
\begin{thm}
\label{thm:vectorfieldsGeneral}Let $X$ be a normal projective variety,
then there is a desingularisation $\pi \colon \hat{X}\rightarrow X$ and
an $\ox{\hat{X}}$-module homomorphism
\[
f \colon \pi^{*}\tx X\rightarrow\tx{\hat{X}}
\]
 that is an isomorphism outside $\pi^{-1}(\sing)$.
\end{thm}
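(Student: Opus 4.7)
The plan is to take the resolution $\pi \colon \hat{X} \to X$ provided by \prettyref{thm:isomorphismTxTdesing} and construct $f$ as the image of the isomorphism $\varphi \colon \tx{X} \xrightarrow{\sim} \pi_{*}\tx{\hat{X}}$ under the $\pi^{*} \dashv \pi_{*}$ adjunction. Concretely, the standard sheaf-theoretic adjunction
\[
\operatorname{Hom}_{\ox{\hat{X}}}\!\left(\pi^{*}\mathcal{F},\, \mathcal{G}\right) \;\cong\; \operatorname{Hom}_{\ox{X}}\!\left(\mathcal{F},\, \pi_{*}\mathcal{G}\right),
\]
specialised to $\mathcal{F} = \tx{X}$ and $\mathcal{G} = \tx{\hat{X}}$, assigns to $\varphi$ a unique $\ox{\hat{X}}$-linear morphism $f \colon \pi^{*}\tx{X} \to \tx{\hat{X}}$. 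Unwinding the definition, $f$ is the composition $\pi^{*}\tx{X} \xrightarrow{\pi^{*}\varphi} \pi^{*}\pi_{*}\tx{\hat{X}} \xrightarrow{\varepsilon} \tx{\hat{X}}$, where $\varepsilon$ is the counit of the adjunction. This will be the candidate morphism.

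To verify that $f$ is an isomorphism away from the exceptional locus, I would set $U \coloneqq X \setminus \sing$ and $\hat{U} \coloneqq \pi^{-1}(U)$. Since $\pi$ is a desingularisation of the normal variety $X$, it restricts to an isomorphism $\pi|_{\hat{U}} \colon \hat{U} \to U$; consequently $(\pi|_{\hat{U}})^{*}$ and $(\pi|_{\hat{U}})_{*}$ are mutually inverse equivalences of the categories of coherent sheaves on $\hat{U}$ and $U$, and the corresponding counit is a natural isomorphism. Restricting the composition above to $\hat{U}$ therefore yields an isomorphism $f|_{\hat{U}}$ because both factors are isomorphisms: $\varphi|_{U}$ by \prettyref{thm:isomorphismTxTdesing}, and the counit by the previous sentence. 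This is precisely the claim.

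The argument is almost entirely formal once \prettyref{thm:isomorphismTxTdesing} is in hand, so I do not expect a genuine obstacle here. The only minor points to keep track of are that the adjunction is applied in the sheaf-theoretic setting (so that the counit is well-defined even though $\tx{\hat{X}}$ is not locally free after pushforward), and that "isomorphism outside $\pi^{-1}(\sing)$" really matches for $\pi$ and for $f$ — both of which are immediate from the construction. All of the substantive content, in particular the reflexivity of $\pi_{*}\tx{\hat{X}}$ supplied by Graf--Kov\'{a}cs, has already been absorbed into \prettyref{thm:isomorphismTxTdesing}.
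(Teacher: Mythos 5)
Your construction is exactly the paper's: it also defines $f$ as the composite of $\pi^{*}$ applied to the isomorphism from \prettyref{thm:isomorphismTxTdesing} with the natural (counit) morphism $\pi^{*}\pi_{*}\tx{\hat{X}}\rightarrow\tx{\hat{X}}$, and checks that both factors are isomorphisms outside $\pi^{-1}(\sing)$. The one detail the paper flags that you take for granted is that the particular resolution supplied by Graf--Kov\'{a}cs is indeed an isomorphism over $X\setminus\sing$ --- this is not automatic from the word ``desingularisation'' alone and requires retracing that construction to Koll\'{a}r's.
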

\begin{proof}
Using \prettyref{thm:isomorphismTxTdesing}, we obtain an isomorphism
$\tx X\rightarrow\pi_{*}\tx{\hat{X}}$ for a suitable resolution $\pi \colon \hat{X}\rightarrow X$.
The map $\pi$ is an isomorphism outside $\pi^{-1}(\sing[X])$ (one
has to retrace the resolution guaranteed by \cite[Theorem 4.2]{GrafKovacsOptimalExtension} to \cite[Theorem 3.45]{KollarResolutions}
for this property). Pulling back $\tx X\rightarrow\pi_{*}\tx{\hat{X}}$
and using the natural morphism $c \colon \pi^*\pi_{*}\tx{\hat{X}}\rightarrow \tx{\hat{X}}$, there is the diagram\[ \setlength{\abovedisplayskip}{\abovedisplayshortskip}
\begin{tikzcd}
\pi^{*}\tx X \arrow[r,"g"] \arrow[rr,bend left,"f"]& \pi^* \pi_* \tx{\hat{X}} \arrow[r, "c"] & \tx{\hat{X}}.
\end{tikzcd}\] Considering the maps $g$ and $c$, it is easy to check that they,
and therefore $f$, are isomorphisms outside $\pi^{-1}(\sing[X])$.
\end{proof}
\begin{rem*}
The editor pointed out to the author that Kawamata \cite[p.\ 14]{KawamataEditor} made use of the map $f$ as well.
%
\end{rem*}
\begin{lem}
\label{lem:TxCurves}Let $X$ be a normal projective variety of dimension
$n$ with ample tangent sheaf $\tx X$ and $C\subset X$ a closed
curve that intersects $\sing[X]$ in at most finitely many points.
Let $\pi \colon \hat{X}\rightarrow X$ be a desingularisation as in \prettyref{thm:vectorfieldsGeneral},
$\hat{C}$ the strict transform of $C$ and $\eta \colon \tilde{C}\rightarrow\hat{C}$
the normalisation of $\hat{C}$. Accordingly, there is the following
commutative diagram:
\[ 
\begin{tikzcd}
\tilde{C} \arrow[r,"\eta"] \arrow[rd,"\nu"'] & \hat{C} \arrow[d,hook] \arrow[r] & C \arrow[d,hook]\\
& \hat{X} \arrow[r,"\pi"] & X
\end{tikzcd}\] 

Then $\nu^{*}\tx{\hat{X}}$ is an ample vector bundle and the anticanonical
degree $-K_{\hat{X}}.\hat{C}$ is positive. If $\hat{C}$ is a rational
curve, $-K_{\hat{X}}.\hat{C}\geq n+1$.
\end{lem}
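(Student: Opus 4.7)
The plan is to first establish that $\nu^{*}\tx{\hat{X}}$ is an ample vector bundle by pulling back the morphism $f\colon \pi^{*}\tx X\to\tx{\hat{X}}$ from \prettyref{thm:vectorfieldsGeneral}, and then to read off the degree statements using standard facts about ample vector bundles on smooth projective curves together with Grothendieck's splitting theorem.

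To establish ampleness, I would apply $\nu^{*}$ to $f$, obtaining a morphism $\nu^{*}f\colon (\pi\circ\nu)^{*}\tx X\to\nu^{*}\tx{\hat{X}}$ on the smooth curve $\tilde C$. Because $C\cap\sing[X]$ is finite, so is $\hat C\cap\pi^{-1}(\sing[X])$ and hence its preimage in $\tilde C$; outside this finite set $\nu^{*}f$ is an isomorphism. The source $(\pi\circ\nu)^{*}\tx X$ is the pullback of the ample sheaf $\tx X$ under the finite morphism $\pi\circ\nu\colon \tilde C\to X$ and is therefore ample. Since $\nu^{*}\tx{\hat{X}}$ is torsion-free, $\nu^{*}f$ factors through the torsion-free quotient $\mathcal{E}$ of $(\pi\circ\nu)^{*}\tx X$, which is an ample vector bundle on $\tilde C$ (ampleness passes to quotients). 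The induced map $\mathcal{E}\to\nu^{*}\tx{\hat{X}}$ is then surjective outside finitely many points, so \prettyref{prop:curvemapAmple} yields ampleness of $\nu^{*}\tx{\hat{X}}$.

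The positivity of $-K_{\hat X}.\hat C$ follows immediately: the projection formula together with $\deg\eta=1$ gives $-K_{\hat X}.\hat C = \deg_{\tilde C}\nu^{*}\tx{\hat X}$, which is positive since an ample vector bundle on a smooth projective curve has positive degree.

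When $\hat C$ is rational we have $\tilde C\simeq\pn[1]$, and Grothendieck's splitting theorem writes $\nu^{*}\tx{\hat{X}}\simeq\bigoplus_{i=1}^{n}\ox{\pn[1]}(a_{i})$. Ampleness forces $a_{i}\geq 1$ for every $i$. The differential $d\nu\colon \tx{\tilde C}\to\nu^{*}\tx{\hat X}$ is nonzero at the generic point (since $\nu$ is non-constant), and hence injective as a sheaf map because $\tx{\tilde C}$ is a line bundle; with $\tx{\tilde C}\simeq\ox{\pn[1]}(2)$, this produces a nonzero morphism $\ox{\pn[1]}(2)\to\ox{\pn[1]}(a_{i})$ for some $i$, so $a_{i}\geq 2$. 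Summing, $-K_{\hat X}.\hat C = \sum_{i=1}^{n}a_{i}\geq 2+(n-1) = n+1$. The main technical care is needed in the first step, to pass from the possibly non-locally-free sheaf $(\pi\circ\nu)^{*}\tx X$ to a genuine ample vector bundle so that \prettyref{prop:curvemapAmple} can be applied.
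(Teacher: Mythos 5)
Your proposal is correct and follows essentially the same route as the paper: pull back $f$ from \prettyref{thm:vectorfieldsGeneral} along $\nu$, pass to the quotient by the (torsion) kernel to get an ample vector bundle mapping to $\nu^{*}\tx{\hat{X}}$ with generic surjectivity, apply \prettyref{prop:curvemapAmple}, and then use Grothendieck splitting plus the nonzero differential $\tx{\tilde C}\simeq\ox{\pn[1]}(2)\to\nu^{*}\tx{\hat{X}}$ to force one $a_i\geq 2$. Your ``torsion-free quotient'' $\mathcal{E}$ coincides with the paper's $\mathcal{A}=\nu^{*}\pi^{*}\tx X/\ker(\nu^{*}f)$, since that kernel is supported on finitely many points and the quotient embeds in the locally free sheaf $\nu^{*}\tx{\hat{X}}$.
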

\begin{proof}
The choice of $\pi$ yields the map $f \colon \pi^{*}\tx X\rightarrow\tx{\hat{X}}$.
Pulling back $f$ via $\nu$ and dividing out the kernel gives \[
\overline{\nu^{*}f} \colon \mathcal{A}\longhookrightarrow\nu^{*}\tx{\hat{X}}
\] with $\mathcal{A}\coloneqq\faktor{\nu^{*}\pi^{*}\tx X}{\ker(\nu^{*}f)}$.
The sheaf $\mathcal{A}$ is ample, since $\tx X$ is ample, $\pi\circ\nu$
is finite and quotients of ample sheaves are ample again. Moreover
$\mathcal{A}$ is locally free of rank $n$ because it is a torsion-free
sheaf on a smooth curve, $\pi\circ\nu$ is an isomorphism outside
of finitely many points and $\ker(\nu^{*}f)$ is supported on only
finitely many points. Using Proposition \ref{prop:curvemapAmple},
we deduce that $\nu^{*}\tx{\hat{X}}$ is an ample vector bundle. Because
$-K_{\hat{X}}.\hat{C}=\deg\nu^{*}\tx{\hat{X}}$, the anticanonical
degree is certainly positive. Since $\nu^{*}\tx{\hat{X}}$ splits
on $\pn[1]$ and a direct sum of ample vector bundles is ample only
if all summands are ample, we obtain $\nu^{*}\tx{\hat{X}}\simeq\bigoplus_{i=1}^{n}\ox{\pn[1]}(a_{i})$
with $a_{i}\geq1$ for all $i$. The dual of the homomorphism $\nu^{*}\cotang[\hat{X}][1]\rightarrow\cotang[\tilde{C}][1]$
is a non-trivial map ${\tx{\pn[1]}\simeq\ox{\pn[1]}(2)\rightarrow\nu^{*}\tx{\hat{X}}}$.
Thus $a_{i}\geq2$ for at least one $i$ and we can conclude $-K_{\hat{X}}.\hat{C}=\sum_{i=1}^{n}a_{i}\geq n+1$.
\end{proof}
Now we use \prettyref{lem:TxCurves} to show that the assumptions
of \prettyref{thm:choMijaoka} are fulfilled for $\hat{X}$ and hence
$X$ is isomorphic to $\pn[n]$.
\begin{proof}[Proof of \prettyref{thm:mainTheorem}]
Normal curves are smooth, so we can assume that $n\geq2$. Let $\pi \colon \hat{X}\rightarrow X$
be a desingularisation as in \prettyref{lem:TxCurves} and let $p\in\hat{X}\setminus\pi^{-1}(\sing[X])$
be any general point outside the exceptional locus. \\
Since $\hat{X}$ is projective, there is an irreducible curve $\hat{C}$
through $p$. As $\hat{C}$ is the strict transform of a closed curve
$C\subset X$, $K_{\hat{X}}.\hat{C}<0$ according to \prettyref{lem:TxCurves}.
Therefore $\hat{X}$ is uniruled by \cite[Theorem 1]{Uniruledness}.

Any rational curve $\hat{C}\subset\hat{X}$ containing $p$ projects
to a curve $C$ on $X$. The curve $C$ meets $\sing[X]$ in at most
finitely many points, thus \prettyref{lem:TxCurves} applies and we
have the assumptions of \prettyref{thm:choMijaoka} fulfilled. So
$\hat{X}$ is isomorphic to the projective space $\pn[n]$. Hence
$X\simeq\pn[n]$ too.
\end{proof}
\newpage{}
\begin{acknowledgement*}
I want to thank Prof.\ Dr.\ Thomas Peternell for his guidance and
support and Dr.\ Patrick Graf for his advice in many occasions, proof\-reading
and especially for hinting me towards \cite[Theorem 4.2]{GrafKovacsOptimalExtension}.
In addition I thank Andreas Demleitner and Dr.~Florian Schrack for
proof\-reading, their advice and countless conversations.
\end{acknowledgement*}
\bibliographystyle{amsalpha}
\bibliography{MAMaths}

\end{document}